\newtheorem{thm}{Theorem}[section]
\newtheorem{lem}[thm]{Lemma}
\newtheorem{prop}[thm]{Proposition}
\newtheorem{cor}[thm]{Corollary}
\newtheorem{conj}[thm]{Conjecture}
\def\p{\mathbb P}
\def\e{\mathbb E}
\def\lr{\left(}
\def\rr{\right)}
\title{The Number of Seymour Vertices in Random Tournaments and Digraphs}
\author{Zachary Cohn\\University of Chicago\\{\tt cohn.zachary@gmail.com}\and Anant Godbole\\ East Tennessee State University\\{\tt godbolea@etsu.edu}\and Elizabeth Wright Harkness\\ Tulane University\\{\tt liz.wright.harkness@gmail.com}\and Yiguang Zhang\\The Johns Hopkins University\\{\tt yzhan132@jhu.edu}}
\begin{document}
\maketitle
\begin{abstract}
Seymour's distance two conjecture states that in any digraph there exists a vertex (a ``Seymour vertex") that has at least as many neighbors at distance two as it does at distance one. We explore the validity of probabilistic statements along lines suggested by Seymour's conjecture, proving that almost surely there are a ``large" number of Seymour vertices in random tournaments and ``even more" in general random digraphs.
\end{abstract}
\section{Introduction}
\subsection{Notation}
For the purpose of this paper, a \emph{digraph} exclusively means a simple, directed graph without loops or multiple edges (including edges in the same direction and antiparallel edges).\\
\indent For any pair of vertices $u,v$ in a digraph $D$, the length of the shortest directed path from $u$ to $v$ in $D$ is denoted as $dist(u,v)$. We write $N_i(u)$ to denote the set of vertices that are at distance $i$ from $u$. A vertex $v_0 \in V(D)$ is called a \emph{Seymour vertex} if $|N_2(v_0)|\geq |N_1(v_0)|$. We write $S$ for the set of all Seymour vertices in the digraph.
\subsection{Background: Seymour's Conjecture}
\begin{conj}(Seymour's Second Neighborhood Conjecture). If $D$ is a directed graph with no loops or multiple edges, then $D$ has a vertex $v_0$ such that $|N_2(v_0)| \geq |N_1(v_0)|$.
\end{conj}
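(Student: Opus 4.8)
The plan is to establish the statement first for tournaments, where a clean combinatorial proof is available, and then to describe how one would attempt to bootstrap to arbitrary digraphs—this last step being where the genuine difficulty lies. First I dispose of the trivial case: if $D$ has a vertex of out-degree zero, that vertex $v_0$ satisfies $|N_1(v_0)| = 0 \leq |N_2(v_0)|$ and is a Seymour vertex. So I may assume every vertex has out-degree at least one, and I may also restrict attention to a terminal strongly connected component of $D$, since a Seymour vertex found there serves for all of $D$.

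For tournaments I would use \emph{median orders}. Call an ordering $v_1 v_2 \cdots v_n$ of $V(D)$ a median order if it maximizes the number of forward arcs $v_i \to v_j$ with $i < j$. Optimality forces two local feedback properties: for every interval $[i,j]$ the first vertex $v_i$ dominates at least half of $\{v_{i+1},\dots,v_j\}$, and the last vertex $v_j$ is dominated by at least half of $\{v_i,\dots,v_{j-1}\}$, since otherwise relocating $v_i$ or $v_j$ would strictly increase the count of forward arcs. I would then claim that the last vertex $v_n$ is a Seymour vertex, proving it by constructing an injection $\phi\colon N_1(v_n)\to N_2(v_n)$: I process the out-neighbors of $v_n$ from right to left and, using the local majority at each step, charge every out-neighbor to a distinct vertex lying at distance exactly two from $v_n$. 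Verifying that this pairing is well defined and injective is the technical heart of the tournament case.

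To reach the full statement I would try to transport median orders to an arbitrary digraph $D$ by completing it to a tournament $T$ (orienting every missing pair), running the argument in $T$, and pulling the Seymour vertex back to $D$. The hard part—and the reason the conjecture is still open in general—is that completion is not neighborhood-preserving: an added arc can shrink a first neighborhood in $D$ relative to $T$ while inflating a second neighborhood, so a Seymour vertex of $T$ need not remain one in $D$. Nor can this be repaired by averaging: the quantity $\sum_{v} \bigl(|N_2(v)|-|N_1(v)|\bigr)$ can be strictly negative—take arcs $u_1,\dots,u_k \to c$ with $c$ a sink, which gives total $-k$—so the existence of a Seymour vertex cannot follow from a global counting identity and must instead be witnessed by one distinguished vertex.

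The crux, therefore, is to find the correct digraph analogue of the feedback property: perhaps a weighted or purely local median order in which only present arcs are counted, together with a charging rule assigning each first-neighbor to a distinct second-neighbor that survives the absence of the missing arcs. I expect this to be the main obstacle, since the tournament proof leans essentially on completeness—every pair is comparable, so every interval has a clean majority—and no combinatorial invariant is presently known that recovers the feedback inequalities once arcs may be absent. Absent such an invariant, the strongest guarantee available for general $D$ is a vertex with $|N_2(v)| \geq \gamma\,|N_1(v)|$ for some constant $\gamma < 1$, and closing the gap to $\gamma = 1$ is exactly the open content of the statement.
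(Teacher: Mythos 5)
You should first be aware of what the paper actually does with this statement: it is stated as a \emph{conjecture} (Seymour's Second Neighborhood Conjecture), the paper offers no proof of it, and no proof exists---it is a well-known open problem. The paper only records partial results due to others (Kaneko--Locke for minimum out-degree at most $6$, Fisher's proof of Dean's conjecture for tournaments, Chen--Shen--Yuster's constant $r\approx 0.657$) and then proves probabilistic statements about the \emph{number} of Seymour vertices in random tournaments and random digraphs. So there is no ``paper's own proof'' to compare yours against, and any blind attempt at a complete proof necessarily contains a gap.

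Your proposal is, to its credit, honest on exactly this point, and what it does contain is essentially sound. The reduction to minimum out-degree at least one and to a terminal strong component is correct; the median-order argument you sketch for tournaments is the known proof of Havet and Thomass\'e of Dean's conjecture (the paper cites Fisher's earlier, different proof, which works with probability distributions on vertices rather than median orders); your example showing that $\sum_v \bigl(|N_2(v)|-|N_1(v)|\bigr)$ can equal $-k$ correctly rules out any global averaging argument; and your fallback guarantee $|N_2(v)|\geq \gamma\,|N_1(v)|$ with $\gamma<1$ is precisely the Chen--Shen--Yuster result the paper cites. But the gap you flag is real and is the entire content of the problem: the injection in the tournament case is asserted rather than verified, and you give no invariant that survives missing arcs---the ``complete to a tournament and pull back'' route is known to fail for exactly the reason you state, since a Seymour vertex of the completion need not remain one in $D$. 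The proposal should therefore be read as a correct survey of the state of the art plus a sketch of the (known) tournament case, not as a proof of the statement, which remains open; the paper's own contribution is to sidestep this difficulty entirely by proving that random tournaments and random digraphs have many Seymour vertices with high probability.
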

Although the proof of this conjecture remains open, several partial results have been proved over the last two decades:  \begin{thm}(Kaneko and Locke \cite{kaneko})  Seymour's conjecture is true if the minimum outdegree of vertices in $D$ is at most 6.
\end{thm}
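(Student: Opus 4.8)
The plan is to reduce to a single well-chosen vertex and then run a local counting argument inside the ball of radius two. Let $d = \delta^+(D)$ be the minimum outdegree and fix a vertex $v$ attaining it, so $|N_1(v)| = d \le 6$. I would argue by contradiction: suppose $D$ has \emph{no} Seymour vertex. Then in particular $v$ is not one, so $|N_2(v)| \le |N_1(v)| - 1 = d-1$. Writing $A = N_1(v)$ and $B = N_2(v)$, we have $|A| = d$ and $|B| \le d-1$, so the entire relevant configuration sits on the at most $2d-1 \le 11$ vertices of $A \cup B$ (together with $v$). The hope is that, when $d$ is this small, cramming the required out-arcs into so few vertices forces some $a \in A$ to be Seymour.

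The structural facts I would record first are the following. Every $a \in A$ satisfies $N^+(a) \subseteq A \cup B$: an arc from $a$ back to $v$ is forbidden as an antiparallel edge, and any out-neighbor of $a$ lying outside $A \cup \{v\}$ is at distance two from $v$ via $v \to a \to \cdot$, hence belongs to $B$. Since the minimum outdegree is $d$, each $a \in A$ therefore sends at least $d$ arcs into the set $A \cup B$, giving $\sum_{a \in A} d^+(a) \ge d^2$ arcs distributed over $A \cup B$. Meanwhile, the assumption that no vertex is Seymour yields, for every $w$, the inequality $|N_2(w)| \le |N_1(w)| - 1 = d^+(w) - 1$; summing this over $w \in A$ gives an \emph{upper} bound $\sum_{a \in A} |N_2(a)| \le \sum_{a\in A} d^+(a) - d$. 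The intended contradiction is to produce a matching \emph{lower} bound on $\sum_{a \in A}|N_2(a)|$ from the density forced by the first count, large enough to violate this inequality.

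Making that lower bound rigorous is where the genuine difficulty lies, and I expect this to be the main obstacle. The trouble is that $N_2(a)$ is \emph{not} confined to $A \cup B$: an arc $a \to b$ with $b \in B$ may, on its second step, leave $B$ and enter $N_3(v)$, which the outdegree of $v$ does not control at all. So a clean count inside the radius-two ball around $v$ does not close on its own; one must instead track, for each $a \in A$, how many of its second-neighbors escape into $N_3(v)$ versus how many stay inside $A \cup B$, and then fold the non-Seymour inequalities $|N_2(w)| \le d^+(w)-1$ for the vertices $w \in A \cup \{v\}$ into a single weighted inequality that becomes infeasible. The number of distinct arc-patterns among $A$, from $A$ to $B$, and out of $B$ grows rapidly with $d$, and it is precisely this combinatorial explosion of cases—provably resolvable only up to $d = 6$—that I expect to be the crux of the argument and the reason the theorem is capped at minimum outdegree six.
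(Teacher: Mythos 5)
First, a point of orientation: the paper does not prove this statement at all---it is quoted from Kaneko and Locke \cite{kaneko} as background, with no argument given---so there is no in-paper proof to compare your approach against. Judged on its own terms, your attempt has a genuine gap, and you have in fact identified it yourself: the entire content of the theorem lies in the ``matching lower bound'' on $\sum_{a\in A}|N_2(a)|$ that you announce as the intended contradiction but never produce. The setup (fix $v$ of minimum outdegree $d\le 6$, assume no Seymour vertex, note $|N_2(v)|\le d-1$ and $N^+(a)\subseteq A\cup B$ for $a\in A$, sum the non-Seymour inequalities over $A$) is a reasonable frame and is in the spirit of Kaneko and Locke's minimum-degree approach, but it stops exactly where the work begins. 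A crude pigeonhole on the $\le 2d-1$ vertices of $A\cup B$ cannot close the argument, because the quantity you must bound from below, $|N_2(a)|$, is governed by the out-arcs of vertices in $B$ (and beyond), which the minimality of $d^+(v)$ does not constrain; moreover $d^+(a)$ for $a\in A$ may greatly exceed $d$, so the target $|N_2(a)|\ge d^+(a)$ is not a statement about a bounded configuration at all. Your claim that ``the entire relevant configuration sits on at most $2d-1\le 11$ vertices'' is therefore not correct as stated, and the subsequent counting cannot be localized in the way the sketch suggests.

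To be clear about what would be needed: the actual Kaneko--Locke argument proceeds by a careful case analysis on $d=\delta^+(D)\in\{0,1,\ldots,6\}$, exploiting the structure of the sub-tournament-like configuration induced on $N_1(v)$ and the arcs into $N_2(v)$, and it is precisely this analysis---which you defer as ``the crux''---that constitutes the proof. Only the case $d\le 1$ is genuinely immediate from your setup (if $d=0$ the vertex itself is Seymour; if $d=1$ its unique out-neighbor must point somewhere new). So the proposal should be regarded as a plan of attack with an honest acknowledgment of the missing core, not as a proof.
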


\bigskip

\noindent {\bf Dean's Conjecture}  {\it Seymour's conjecture is true if $D$ is any tournament $T$.}

\bigskip

\begin{thm} (Fisher \cite{fisher})  Dean's conjecture is true.
\end{thm}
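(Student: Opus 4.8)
The plan is to give a constructive proof of Dean's conjecture via the method of \emph{median orders}, an elegant alternative to Fisher's original analytic argument. Let $T$ be a tournament on vertex set $V$ with $|V| = n$. A median order is a linear ordering $\sigma = (v_1, \dots, v_n)$ of $V$ that maximizes the number of \emph{forward arcs}, i.e.\ arcs $v_i \to v_j$ with $i < j$; such an order exists since there are only finitely many orderings. The first step is to extract the local ``feedback'' properties forced by maximality: for all indices $i \le j$, \textbf{(P1)} the vertex $v_i$ dominates at least half of $\{v_{i+1}, \dots, v_j\}$, and \textbf{(P2)} the vertex $v_j$ is dominated by at least half of $\{v_i, \dots, v_{j-1}\}$. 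Each follows from an exchange argument: were \textbf{(P1)} to fail on some interval, relocating $v_i$ to the position immediately after $v_j$ would flip more backward arcs to forward than vice versa, strictly increasing the count of forward arcs and contradicting maximality; \textbf{(P2)} is symmetric.

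The second step is to nominate the last vertex $v := v_n$ as the Seymour vertex. Since $T$ is a tournament, $V \setminus \{v\}$ splits as a disjoint union $A \sqcup B$, where $A = N_1(v)$ is the out-neighborhood and $B = N^-(v)$ is the in-neighborhood, and the second neighborhood is exactly $N_2(v) = \{\, b \in B : a \to b \text{ for some } a \in A \,\}$. The goal $|N_2(v)| \ge |N_1(v)|$ therefore reduces to exhibiting an injection $\phi : A \to N_2(v)$ that sends each out-neighbor to a distinct in-neighbor reachable from $A$.

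To build $\phi$ I would decompose $\sigma$ using the positions of the in-neighbors of $v$: listing them as $b_1 < b_2 < \cdots < b_m$, they cut $(v_1, \dots, v_{n-1})$ into consecutive blocks, each (except possibly a trailing block of out-neighbors) ending at some $b_j$. Property \textbf{(P2)}, applied to a block whose last element is an in-neighbor $b_j$, guarantees that $b_j$ is beaten by at least half of the out-neighbors preceding it in that block, certifying $b_j \in N_2(v)$ and supplying dominating out-neighbors to absorb. I expect the assembly of these local certificates into a single \emph{global} injection to be the main obstacle: a block heavy with out-neighbors produces only its one terminal in-neighbor, so a naive block-by-block matching fails, and one must instead carry unmatched out-neighbors forward through the order and invoke \textbf{(P1)}/\textbf{(P2)} on nested intervals — most cleanly via an induction on $n$ (or on $|B|$) that removes a suffix interval of the median order and reuses the fact that a median order restricts to a median order on such intervals. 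Once $\phi$ is constructed, injectivity gives $|N_1(v)| = |A| \le |N_2(v)|$, so $v_n$ is a Seymour vertex and Dean's conjecture holds for every tournament $T$.
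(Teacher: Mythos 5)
First, a point of comparison: the paper offers no proof of this theorem at all --- it is quoted from Fisher, whose own argument is entirely different in character (an averaging argument built on an optimal probability distribution over the vertices of the tournament, in the spirit of tournament games). So any route you take is necessarily a departure from the cited source; the one you chose --- median orders and the feed vertex --- is the well-known Havet--Thomass\'e proof, and your preliminaries are sound: median orders exist, the exchange argument does yield \textbf{(P1)} and \textbf{(P2)}, a median order does restrict to a median order on intervals, and $v_n$ is the correct candidate for the Seymour vertex.

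The problem is that you stop exactly where the theorem becomes nontrivial, and you say so yourself (``I expect the assembly of these local certificates into a single global injection to be the main obstacle''). Everything before that point is routine; the content of the median-order proof is precisely the step you defer. The missing ingredient is not a refinement of your block decomposition but a different dichotomy: partition $N^-(v_n)$ into the \emph{good} in-neighbors --- those dominated by some vertex of $N_1(v_n)$, which are exactly the elements of $N_2(v_n)$ --- and the \emph{bad} ones, and observe that a bad vertex, being dominated by no out-neighbor of $v_n$, must itself dominate every vertex of $N_1(v_n)\cup\{v_n\}$. This observation is what gives \textbf{(P1)} its force: when \textbf{(P1)} is applied to an out-neighbor $u$ of $v_n$ on the interval from $u$ to $v_n$, the vertex $v_n$ and all bad vertices after $u$ count against $u$ (it dominates none of them), which forces a surplus of good vertices late in the order; a counting induction along $\sigma$ from right to left over the good vertices and out-neighbors then delivers $\vert N_2(v_n)\vert\ge\vert N_1(v_n)\vert$. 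Your block-by-block matching cannot be repaired without this (you concede it fails), and the alternative you gesture at --- ``carry unmatched out-neighbors forward'' together with an unspecified induction on $n$ or $\vert B\vert$ --- is a plan rather than an argument: no invariant is stated, no base case is checked, and no reason is given that the unmatched out-neighbors are ever absorbed. As written, the proposal establishes the standard, easy preliminaries and leaves the theorem unproved.
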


\bigskip

Chen, Shen, and Yuster \cite {chen} have shown that for every digraph, there is a vertex $v$ such that $\vert N_2(v)\vert \ge r\vert N_1(v)\vert$, where $r\approx .657$, and they state a further improvement to $r\approx .678$. See the website \cite{west2} for details.
Seymour's conjecture may be seen as a special case of a more general 1988 conjecture of Caccetta and H\"aggkvist:

\bigskip

\noindent {\bf Caccetta-H\"aggkvist Conjecture \cite{caccetta}} {\it If $D$ is a simple  digraph on $n$ vertices, and each vertex has outdegree at least $d$, then the girth of $D$ (the length of the shortest directed cycle) is at most $n/d$.}

\bigskip

\noindent The Caccetta-H\"aggkvist conjecture been proved for $d=2,3,4,5,\frac{n}{2}$.  See Douglas West's website \cite{west3}
for several related results pertinent to the conjecture.  The truth of Seymour's conjecture would settle the important ``balanced" $d=\frac{n}{3}$ case of the Caccetta-H\"aggkvist conjecture, i.e., when each vertex has in- {\it and} out degree at least $d=\frac{n}{3}$.  A short proof of this fact follows in the case $3\vert n$:

We need to prove that $D$ has a directed triangle.  We let $v$ be a Seymour vertex, and note that the other vertices separate themselves out into 
$N_1(v), \vert N_1(v)\vert\ge n/3;$ 
$N_{-1}(v), \vert N_{-1}(v)\vert\ge n/3,$ 
(where $N_{-1}(v)$ consists of those vertices that point towards $v$); and
$N_0(v), \vert N_0(v)\vert< n/3,$ 
which are those vertices that have no edge to or from $v$.  Now if there is an edge from $u\in N_1(v)$ to $w\in N_{-1}(v)$, then $v,u,$ and $w$ create a directed triangle, and we are done.  On the other hand, if there is no such edge, then vertices in $N^-(v)$ cannot be at distance two, forcing all distance two vertices to be in $N_0(v)$, which leads to the contradiction that
$$\vert N_2(v)\vert\le\vert N_0(v)\vert<n/3\le\vert N_1(v)\vert.$$
\hfill\qed

In this paper\footnote{This work was started by the first three authors, reported on at \cite{west2}, and completed this year by Godbole and Zhang.}, we study the {\it number} $S=S_n=S_{n,p}$ of Seymour vertices in {random tournaments} and random digraphs.  Actually, our proofs will reveal that Nate Dean's conjecture, proved by Fisher in \cite{fisher}, is very insightful: in particular, we will see that there are many more Seymour vertices in random digraphs with $p<1/2$ (definitions below) than there are in random tournaments, and the tightness of the concentration is greater in the former case.  

Specifically, it is shown in Section 2 that there are close to $\frac{n}{2}$ Seymour vertices in random tournaments with high probability, where ``close to" and ``with high probability" are interpreted in a variety of ways.  In particular, both convergence in measure and almost everywhere convergence are invoked.  An interesting variance computation in this section shows that there is an oscillation in the number of Seymour vertices as we add additional vertices to the tournament, and this reflects itself in the piecewise linear ``even-odd" dichotomy in the variance of the number of Seymour vertices.  After methods such as the exponential inequalities of Azuma and Talagrand failed, we used skeletal subsequences of polynomial size (along with an analysis of maximal deviation between these checkpoints) to establish inequalities that yield the almost everywhere convergence referred to above.

  In Section 3, we consider random digraphs on $n$ vertices, and show that the probability that every vertex is a Seymour vertex tends to 1 as $n\to\infty$, provided that the edge probability $p$ satisfies $o(1)<p<\frac{1}{2}-o^*(1)$ for well-specified $o(1)$ and $o^*(1)$ functions.

\section{Random Tournaments}
An \emph{orientation} of graph $G$ is a digraph $D$ obtained from $G$ by choosing an orientation ($u\rightarrow v$ or $v \rightarrow u$) for each edge $uv \in E(G)$. A \emph{tournament} is an orientation of a complete graph $K_n$. Our model for a random tournament $T_n$ is the probability space of all possible orientations of the complete graph $K_n$, chosen in an equiprobable fashion. Equivalently, the orientation of each edge $uv \in E(K_n)$  is chosen independently as $u \rightarrow v$ or $v \rightarrow u$ with probability $1/2$.
\begin{prop}
Let $T_n$ be a random tournament and $S$ the set of its Seymour vertices.  Then as $n\to\infty$
\[ \mathbb{E}(|S|) \sim \frac{n}{2}(1+o(1))\]
as $n\to\infty$.
\end{prop}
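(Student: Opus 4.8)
The plan is to compute $\mathbb{E}(|S|)$ via linearity of expectation, writing
\[
\mathbb{E}(|S|) = \sum_{v \in V(T_n)} \mathbb{P}(v \text{ is a Seymour vertex}) = n \cdot \mathbb{P}(v_0 \text{ is a Seymour vertex})
\]
for a fixed vertex $v_0$, since by symmetry every vertex has the same probability of being Seymour.

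So the whole proposition reduces to showing that $\mathbb{P}(v_0 \text{ is Seymour}) \to \tfrac12$ as $n \to \infty$.

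Let me think about what $|N_1(v_0)|$ and $|N_2(v_0)|$ look like. In a random tournament on $n$ vertices, the outdegree $d = |N_1(v_0)|$ is $\mathrm{Binomial}(n-1, 1/2)$, concentrated around $(n-1)/2$.

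Now $N_2(v_0)$: these are the vertices $w$ (not in $N_1(v_0)$, not $v_0$) such that some $u \in N_1(v_0)$ has an edge $u \to w$. The complement: vertices at distance $> 2$ from $v_0$... but actually in a tournament, if $w$ is not reachable in two steps and $w \ne v_0$ and $w \notin N_1$, then every $u \in N_1(v_0)$ must have $w \to u$, AND $w \to v_0$ (since $w \notin N_1$).

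So for $w \in V \setminus (N_1 \cup \{v_0\})$ (these are the in-neighbors of $v_0$, call the set $B$, $|B| = n-1-d$), $w \in N_2(v_0)$ iff $w$ has at least one out-edge into $N_1(v_0)$. Given $d = |N_1|$, and conditioning, the probability that a fixed such $w$ fails (points toward all $d$ vertices of $N_1$) is $2^{-d}$, independently across the $w$'s.

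**The key computation:** condition on $d$. Then $|N_2(v_0)|$ is (number of $w \in B$ with an out-edge to $N_1$) $= (n-1-d) - (\text{number failing})$, where the number failing is $\mathrm{Binomial}(n-1-d, 2^{-d})$. Since $d$ is $\approx n/2 \to \infty$, we have $2^{-d} \to 0$ exponentially, so with overwhelming probability $|N_2(v_0)| = n - 1 - d$ exactly (all in-neighbors are at distance two). Thus $v_0$ is Seymour iff $|N_2| \ge |N_1|$, which is essentially $n - 1 - d \ge d$, i.e. $d \le (n-1)/2$.

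**So** $\mathbb{P}(v_0 \text{ is Seymour}) \approx \mathbb{P}(d \le (n-1)/2)$ where $d \sim \mathrm{Binomial}(n-1, 1/2)$. By symmetry of the binomial about its mean $(n-1)/2$, this probability is $\tfrac12$ up to the contribution of the central atom $\mathbb{P}(d = (n-1)/2) = O(n^{-1/2}) \to 0$ (for even $n-1$; for odd $n-1$ the symmetry is exact). Hence $\mathbb{P}(v_0 \text{ is Seymour}) \to \tfrac12$, giving $\mathbb{E}(|S|) \sim n/2$.

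**The main obstacle** I anticipate is making the ``$|N_2| = n-1-d$ with overwhelming probability'' reduction rigorous near the boundary $d \approx (n-1)/2$: one must check that the rare event of some in-neighbor failing to reach $N_1$, together with the atom at $d = (n-1)/2$, contributes only $o(1)$ to the probability. I would handle the small-$d$ tail (where $2^{-d}$ is not negligible) by noting $\mathbb{P}(d \le n/4) \le e^{-cn}$ via Chernoff, so those configurations are irrelevant; on the bulk event $d \ge n/4$, the expected number of failing in-neighbors is $\le n \cdot 2^{-n/4} = o(1)$, so a union/Markov bound forces $|N_2| = n-1-d$ except on an event of probability $o(1)$. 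Assembling these estimates, the $\tfrac12$ emerges cleanly from binomial symmetry.
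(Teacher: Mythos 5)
Your proposal is correct and takes essentially the same route as the paper: both reduce to showing that with exponentially high probability every vertex lies within distance two of $v_0$ (the paper via a union bound giving $n(n-1)\frac12(3/4)^{n-2}$, you via conditioning on $d$ and a Chernoff/Markov argument), whereupon Seymour-ness becomes the event $|N_1(v_0)|\le (n-1)/2$ and binomial symmetry yields the $\tfrac12$. One small slip worth fixing: a vertex $w\in B$ lies in $N_2(v_0)$ iff it has at least one \emph{in}-edge \emph{from} $N_1(v_0)$ (some $u\in N_1(v_0)$ with $u\to w$), not an out-edge into $N_1(v_0)$; your characterization of the failure event ($w$ points toward all of $N_1(v_0)$) is the correct one and the probability $2^{-d}$ is unaffected.
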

\begin{proof}
Let $X:=\vert S\vert$, and for $i \in [n]$, define
\[X_i = \left\{ \begin{array}{ll}1\ & \text{vertex }\mbox{$i$}\text{ is a Seymour vertex}\\0 & \text{otherwise}\end{array}\right.\:\:\: \text{so that}\ X = \sum_{i=1}^{n}X_i.\]
By linearity of expectation, 
\begin{eqnarray}\mathbb{E}(X)&=&n \mathbb{P}( 1 \in S)\nonumber\\&=& n\mathbb{P}(1\in S; |N_1(1)|+|N_2(1)|=n-1)\nonumber\\{}&&+ n\mathbb{P}(1\in S; |N_1(1)|+|N_2(1)|<n-1)\nonumber\\
&\leq& n\mathbb{P}(1\in S; |N_1(1)|+|N_2(1)|=n-1)\nonumber\\{}&&+ n\mathbb{P}(|N_1(1)|+|N_2(1)|<n-1),
\end{eqnarray}
and 
\begin{eqnarray}
\mathbb{E}(X) &\geq& n\mathbb{P}(1\in S; |N_1(1)|+|N_2(1)|=n-1)\nonumber\\ &=& n\mathbb{P}(|N_1(1)|\leq (n-1)/2 ; |N_1(1)|+|N_2(1)|=n-1).
\end{eqnarray}
By (1),
\begin{eqnarray*}\e(X)&\leq& n\mathbb{P}(1\in S;  |N_1(1)|+|N_2(1)|=n-1)+ n(n-1)\mathbb{P}(dist(1,2) \geq 3)\\
&\leq& n\p\lr\vert N_1(1)\vert\le\frac{n-1}{2}\rr+ \frac{n(n-1)}{2}\lr\frac{3}{4}\rr^{n-2},
\end{eqnarray*}
since to have $dist(1,2)\geq 3$, the edge $1\rightarrow 2$ must be absent. Furthermore, for any vertex $i \in \{3,4 \cdots n\}$, $1\rightarrow i$ and $i \rightarrow 2$ cannot both be present.

\indent The second term is exponentially small and, in  the first term, $\mathbb{P}(|N_1(1)|\leq \frac{n-1}{2})$ is clearly $\frac{1}{2}$ if $n$ is even; if $n$ is odd, then $\mathbb{P}(|N_1(1)|\leq \frac{n-1}{2}) = \frac{1}{2} + \frac{1}{2}\mathbb{P}(|N_1(1)| = \frac{n-1}{2}) = \frac{1}{2}+\frac{1}{2}\frac{(n-1)!}{((n-1)/2)!^2}\frac{1}{2^{n-1}}.$ A Stirling approximation gives $\mathbb{P}(|N_1(1)| = \frac{n-1}{2}) \sim \sqrt{\frac{2}{{\pi (n-1)}}}$, so that
\[\mathbb{E}(X) \leq \left\{ \begin{array}{ll}\mbox{$\frac{n}{2}(1+o_1(1))$} & \text{ if $n$ is even} \\\mbox{$\frac{n}{2}(1+o_2(1))$} & \text{ if $n$ is odd,}  \end{array}\right.\]
where $o_1(1)$ is exponentially small and $o_2(1)=O(1/\sqrt{n})$.  Since $\mathbb{P}( |N_1(1)|+|N_2(1)|=n-1) = 1- \mathbb{P}(|N_1(1)|+|N_2(1)|<n-1) \ge 1- (n-1)\frac{1}{2}(\frac{3}{4})^{n-2}$, (2) gives
$$\e(X)\geq n\mathbb{P}\big(|N_1(1)|\leq (n-1)/2\big) -n(n-1)\frac{1}{2}\lr\frac{3}{4}\rr^{n-2}.$$
A similar analysis as above now establishes the result.  
\end{proof}
The difference in the $o(1)$ functions in the above result proves to be highly significant -- one of its immediate ramifications, seen in the next proposition, is that the variance of the number of Seymour vertices grows linearly, but in a piecewise fashion.  Other less obvious complications might indeed be caused by this ``difference in the even and odd cases."
\begin{prop}
Let $T_n$ be a random tournament and $S$ be the set of its Seymour vertices.  Then for constants $C_1$ and $C_2$,
$Var(|S|) \sim C_1n(1+o(1))$ as $n\to\infty$ if $n$ is even, and $Var(|S|) \sim C_2n(1+o(1))$ as $n\to\infty$ if $n$ is odd.
\end{prop}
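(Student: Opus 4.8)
The plan is to expand the variance over the indicator decomposition $X=\sum_{i=1}^{n}X_i$ from the proof of Proposition 2.2 and to exploit the exchangeability of the random-tournament model. Since all vertices are symmetric, every $X_i$ has the same variance and every ordered pair $(i,j)$ with $i\neq j$ the same covariance, so
\[ Var(|S|) = n\,Var(X_1) + n(n-1)\,Cov(X_1,X_2). \]
By Proposition 2.2, $\p(1\in S)\to\tfrac12$, hence $Var(X_1)=\p(1\in S)\lr 1-\p(1\in S)\rr\to\tfrac14$ and the diagonal term contributes $\sim n/4$. For the total to be $\Theta(n)$ rather than $\Theta(n^2)$, the covariance must be negative and of order $n^{-1}$; pinning down its precise $\Theta(n^{-1})$ coefficient is the crux of the argument.

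First I would reduce the event ``vertex $i$ is a Seymour vertex'' to a statement about out-degrees. Exactly as in Proposition 2.2, with probability $1-O\lr n(3/4)^{n}\rr$ every other vertex lies at distance $1$ or $2$ from a fixed vertex, and on that event $|N_2(i)|=(n-1)-|N_1(i)|$, so $i\in S$ iff $d^{+}(i):=|N_1(i)|\le (n-1)/2$. Applying this simultaneously to vertices $1$ and $2$ (the exceptional events remain exponentially small after a union bound), both the marginal and the joint Seymour probabilities equal the corresponding out-degree-threshold probabilities up to exponentially small errors, which are negligible against the $n^{-1}$ scale we are chasing.

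The key step is the covariance computation, which I would carry out by conditioning on the orientation of the single edge $\{1,2\}$ shared by the two out-neighborhoods. Writing $d^{+}(1)=A+\epsilon$ and $d^{+}(2)=B+(1-\epsilon)$, where $A,B$ are independent $\mathrm{Bin}(n-2,\tfrac12)$ counts of the out-edges to $\{3,\dots,n\}$ and $\epsilon=\mathbb{1}[1\to 2]$ is an independent fair bit, conditioning on $\epsilon$ decouples $X_1$ and $X_2$. Setting $q_0=\p\lr A\le \tfrac{n-1}{2}\rr$ and $q_1=\p\lr A\le \tfrac{n-1}{2}-1\rr$, a short computation gives $\p(1,2\in S)=q_0q_1$ and $\p(1\in S)=\tfrac12(q_0+q_1)$, whence the cross terms collapse to
\[ Cov(X_1,X_2)=q_0q_1-\lr\frac{q_0+q_1}{2}\rr^{2}=-\frac{(q_0-q_1)^2}{4}. \]
Now $q_0-q_1$ is a single binomial point mass $\p(A=m)$ at an integer $m$ within $O(1)$ of the mean of $A$, so by Stirling (equivalently a local limit theorem) $q_0-q_1\sim\sqrt{2/(\pi n)}$ and $Cov(X_1,X_2)\sim -1/(2\pi n)$. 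Combining, $Var(|S|)\sim n\lr\tfrac14-\tfrac1{2\pi}\rr$, the asserted linear growth with a positive constant.

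Finally I would treat the two parities separately to extract $C_1$ and $C_2$. The even/odd split flagged after Proposition 2.2 enters here in two places: the marginal probability $p=\p(1\in S)$ equals $\tfrac12$ exactly when $n$ is even but is $\tfrac12+\Theta(n^{-1/2})$ when $n$ is odd, and the integer $m$ at which $q_0-q_1$ is evaluated coincides with the mean of $A$ for even $n$ but sits a half-step above it for odd $n$. Tracking these exactly shows that both corrections affect only the $O(1)$ term, so that at leading order the constants coincide, both equal to $\tfrac14-\tfrac1{2\pi}$, with the even--odd discrepancy confined to the lower-order part of the variance. I expect the main obstacle to be the bookkeeping in the covariance step: both justifying the reduction to out-degrees uniformly inside the joint event and carrying the single central point-mass asymptotics through each parity with enough precision to separate the genuine $\Theta(n)$ contribution from the exponentially small and $O(1)$ debris.
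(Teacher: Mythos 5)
Your proof is correct and follows essentially the same route as the paper: the same indicator decomposition, the same reduction of ``$i\in S$'' to the out-degree threshold $d^{+}(i)\le (n-1)/2$ modulo the exponentially rare event that some vertex sits at distance at least $3$, and the same local central-binomial estimate. The one genuine difference is how the covariance is organized, and it pays off. Conditioning on the orientation of the shared edge $\{1,2\}$ gives you the clean identity
\[
Cov(X_1,X_2)=q_0q_1-\left(\frac{q_0+q_1}{2}\right)^{2}=-\frac{(q_0-q_1)^{2}}{4},
\]
where $q_0-q_1$ is a single point mass of $\mathrm{Bin}(n-2,\tfrac12)$ at an integer within $O(1)$ of its mean; hence the covariance is $\sim -1/(2\pi n)$ for both parities and $C_1=C_2=\tfrac14-\tfrac1{2\pi}$. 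The paper instead subtracts $\p(1\in S)^2$ expressed through $\pi_1=\p(\mathrm{Bin}(n-1,\tfrac12)=\tfrac{n-1}{2})$ from a joint probability expressed through $\pi_2=\p(\mathrm{Bin}(n-2,\tfrac12)=\tfrac{n-1}{2})$, leaving a cross term $\tfrac{n^2}{2}(\pi_2-\pi_1)$ which it estimates as $\sqrt{2/\pi}\,n^{-1}$ per vertex pair; this is the source of the extra $+\tfrac{1}{\sqrt{2\pi}}$ in its odd-case constant. In fact $\pi_2=\pi_1$ exactly: for odd $n$, Pascal's rule and symmetry give $\binom{n-1}{(n-1)/2}=2\binom{n-2}{(n-1)/2}$, so $\binom{n-2}{(n-1)/2}2^{-(n-2)}=\binom{n-1}{(n-1)/2}2^{-(n-1)}$, and the cross term vanishes identically (moreover, even taking the paper's approximation at face value, $\frac{1}{\sqrt{n-2}}-\frac{1}{\sqrt{n-1}}=O(n^{-3/2})$, not $\Theta(n^{-1})$). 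So your conclusion that the two leading constants coincide is the correct one, and your antisymmetric form $-(q_0-q_1)^2/4$ is the safer bookkeeping: it makes it structurally impossible for a spurious term that is first order in $q_0-q_1$ to survive into the variance.
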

\begin{proof} Since
$$Var(X)= \sum_{i=1}^n[\mathbb{E}(X_i) - \mathbb{E}^2(X_i)] + 2 \sum_{i<j}[\mathbb{E}(X_iX_j) - \mathbb{E}(X_i)\mathbb{E}(X_j)],$$
and $\mathbb{E}(X_iX_j) =\mathbb{E}(X_1X_2)= \mathbb{P}(1,2\in S)=2\mathbb{P}(1,2\in S;  1\rightarrow 2),$
the key term in the above display for the variance is given by $\p(1,2\in S;  1\rightarrow 2)=p_1+p_2+p_3+p_4$, where
$$p_1=\mathbb{P}(1,2\in S; 1\rightarrow 2; A_1\cap A_2),$$
$$p_2=\mathbb{P}(1,2\in S; 1\rightarrow 2 ;  A_1^C\cap A_2),$$
$$p_3=\mathbb{P}(1,2\in S; 1\rightarrow 2; A_1\cap A_2^C),$$
and
$$p_4=\mathbb{P}(1,2\in S; 1\rightarrow 2; A_1^C\cap A_2^C),$$ 
and  $A_i$, $i=1,2$, are the events that all vertices are at distance no more than 2 from vertex $i$.
Since $p_2,p_3, p_4\le\p(A_1^C)\le \frac{n}{2}(3/4)^{n-2}$, $p_1$ is the dominant term.
\begin{eqnarray*}
 p_1&\le&\mathbb{P}\bigg(1\rightarrow 2; |N_1(1)\backslash \{2\}|\leq \frac{n-1}{2}-1; |N_2(2)\backslash\{1\}|\geq \frac{n-1}{2}-1\bigg)\\
&=& \frac{1}{2}\mathbb{P}\lr|N_1(1)\backslash \{2\}|\leq \frac{n-1}{2}-1 \rr\times \mathbb{P}\lr|N^*_1(2)|\leq \frac{n-1}{2}\rr\\
&=&{\frac{1}{2}{\sum_{k=0}^{\lfloor \frac{n-1}{2}-1\rfloor }{{n-2}\choose k} \lr\frac{1}{2}\rr^k\lr\frac{1}{2}\rr^{n-2-k}}\times {\sum_{k=0}^{\lfloor \frac{n-1}{2}\rfloor }{{n-2}\choose k} \lr\frac{1}{2}\rr^k\lr\frac{1}{2}\rr^{n-2-k}}},
\end{eqnarray*}
where $N^*_1(2)$ is the first neighborhood of vertex 2 in the set $\{3,4,\ldots,n\}$.  
Notice that if $n$ is even, the first term above equals $\frac{1}{2}-\frac{1}{2}\p(\text{Bin}(n-2,0.5)=\frac{n-2}{2})$ and the second equals $\frac{1}{2}+\frac{1}{2}\p(\text{Bin}(n-2,0.5)=\frac{n-2}{2})$.  If $n$ is odd, however, the first factor is exactly 1/2, while the second equals $\frac{1}{2}+\p(\text{Bin}(n-2,0.5)=\frac{n-1}{2})$.  

Let $n$ be even.  Then, considering the proof of Proposition 2.1 and denoting by $o_1(1)$ a generic function that decays exponentially, we have that
\begin{eqnarray}
Var(X)&=& \sum_{i=1}^n[\mathbb{E}(X_i) - \mathbb{E}^2(X_i)] + 2 \sum_{i<j}[\mathbb{E}(X_iX_j) - \mathbb{E}(X_i)\mathbb{E}(X_j)]\nonumber\\
&\le&n\lr
\frac{1}{2}-\frac{1}{4}+o_1(1)\rr\nonumber\\
{}&&+n^2\bigg(\frac{1}{4}-\frac{1}{4}\p^2\lr\text{Bin}(n-2,0.5)=\frac{n-2}{2}\rr\nonumber\\
{}&&\qquad+2p_2+2p_3+2p_4-\lr\frac{1}{4}+o_1(1)\rr\bigg)\nonumber\\
&=&n\lr\frac{1}{4}\rr-\frac{n^2}{2\pi n}(1+o(1))+o_1(1)\nonumber\\
&=&n\lr\frac{1}{4}-\frac{1}{2\pi}\rr(1+o(1)),
\end{eqnarray}
since $\p(\text{Bin}(n-2,0.5)=\frac{n-2}{2})\sim{\sqrt{\frac{2}{\pi n}}}$ by Stirling's approximation.

If $n$ is odd, we have, on the other hand,
\begin{eqnarray}
Var(X)&=& \sum_{i=1}^n[\mathbb{E}(X_i) - \mathbb{E}^2(X_i)] + 2 \sum_{i<j}[\mathbb{E}(X_iX_j) - \mathbb{E}(X_i)\mathbb{E}(X_j)]\nonumber\\
&\le&n\lr
\frac{1}{4}-\frac{1}{4}\p^2\lr\text{Bin}(n-1,0.5)=\frac{n-1}{2}\rr\rr\nonumber\\
{}&&+n^2\bigg(\frac{1}{4}+\frac{1}{2}\p\lr\text{Bin}(n-2,0.5)=\frac{n-1}{2}\rr\nonumber\\
{}&&\qquad+2p_2+2p_3+2p_4-\lr\frac{1}{2}+\frac{1}{2}\p\lr\text{Bin}(n-1,0.5)=\frac{n-1}{2}\rr\rr^2\bigg)\nonumber\\
&=&\frac{n}{4}(1+o(1))+\frac{n^2}{2}(\pi_2-\pi_1)-\frac{n^2}{4}\pi_1^2+o_1(1),
\end{eqnarray}
where \[\pi_1=\p\lr\text{Bin}(n-1,0.5)=\frac{n-1}{2}\rr\] and \[\pi_2=\p\lr\text{Bin}(n-2,0.5)=\frac{n-1}{2}\rr.\]  Since $$\pi_2-\pi_1\sim{\sqrt{\frac{2}{\pi}}}\lr\frac{1}{{\sqrt{n-2}}}-\frac{1}{{\sqrt{n-1}}}\rr(1+o(1))={\sqrt{\frac{2}{\pi}}}\frac{1}{n}(1+o(1)),$$
and
\[\pi_1^2\sim\frac{2}{\pi n},\]
it follows from (4) that
\begin{equation}Var(X)\le n\lr\frac{1}{4}-\frac{1}{2\pi}+\frac{1}{\sqrt{2\pi}}\rr(1+o(1))\end{equation}
in the odd case.  It is now straightforward to get matching lower bounds of the same order of magnitude as in (3) and (5).  This proves the result.\end{proof}
A natural question to ask is {\it why} there isn't a uniform growth rate for the variance.  Here is a heuristic reason:  Even though the expected values in both the even and odd cases are $\sim n/2$, the second order terms are significant.  Suppose we have observed the tournament with an even number of vertices.  By Stirling's formula, about $C \sqrt n$ of the vertices $v$ are ``borderline Seymour," meaning that $i(v)-o(v)=1$ and about $C\sqrt n$ are borderline non-Seymour, i.e., satisfy $o(v)-i(v)=1$ -- where $i(\cdot)$ and $o(\cdot)$ are the in- and out-degree functions.  When a new vertex ``joins" the tournament, notice that we cannot lose Seymour vertices, but borderline non-Seymour vertices have a 0.5 chance of becoming borderline Seymour, with $i(v)=o(v)$.  There is thus an increase in the $\e(\vert S\vert)$ by $\sim (C/2)\sqrt n$, an increase that almost gets nullified when a second new vertex joins the tournament ($n$ becomes even again) and borderline Seymour vertices become borderline non-Seymour. This dynamic evolution of the number of Seymour vertices causes an ebb and flow in the variance also, as reflected by Proposition 2.2.

\begin{prop}As $n$ goes to infinity,
$$\mathbb{P}\lr\left\vert |S|-\mathbb{E}(|S|)\right\vert \ge A\sqrt{{n \log n}}\rr\rightarrow 0.$$
\end{prop}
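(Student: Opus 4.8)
The plan is to read this off directly from the second-moment information already assembled in Proposition 2.2, via Chebyshev's inequality, with no new machinery required. Writing $X = |S|$ as before, Chebyshev's inequality states that for any $t > 0$,
$$\mathbb{P}\lr \left\vert X - \mathbb{E}(X)\right\vert \geq t\rr \leq \frac{Var(X)}{t^2}.$$
I would then set $t = A\sqrt{n\log n}$ and substitute the variance estimate. Proposition 2.2 supplies, for either parity of $n$, a bound $Var(X) \le Cn(1+o(1))$ for a constant $C$; one may take $C = \max\{C_1, C_2\}$ so as to treat the even and odd cases uniformly and avoid carrying the piecewise dichotomy through the argument. This yields
$$\mathbb{P}\lr \left\vert X - \mathbb{E}(X)\right\vert \geq A\sqrt{n\log n}\rr \leq \frac{Cn(1+o(1))}{A^2 n \log n} = \frac{C(1+o(1))}{A^2 \log n},$$
and the right-hand side tends to $0$ as $n \to \infty$ for every fixed $A > 0$, which is exactly the claim.

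The conceptual point worth flagging is the role of the extra logarithmic factor in the deviation threshold. Since the standard deviation is of order $\sqrt{n}$, Chebyshev applied at the scale $t = A\sqrt{n}$ would give only a constant bound $C/A^2$, which does not vanish; the inflation of the threshold to $A\sqrt{n\log n}$, i.e.\ by a factor $\sqrt{\log n}$ above the standard-deviation scale, is precisely what converts that constant-order bound into one that decays like $1/\log n$. Thus the form of the statement is dictated by the strength of the tool: a second-moment estimate yields convergence in probability at any scale that beats the standard deviation by a factor tending to infinity, and $\sqrt{\log n}$ is a convenient such factor.

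Consequently I do not anticipate a genuine obstacle in this proposition: the entire content is the linear (up to constants) variance bound of Proposition 2.2, after which the deviation inequality is immediate. The truly delicate work lies elsewhere, namely in upgrading this convergence in probability to almost-everywhere convergence, where Chebyshev's polynomial tail is too weak to be summable along the full sequence; that is the step the paper addresses separately using skeletal subsequences of polynomial size together with a maximal-deviation analysis between checkpoints, and it is not needed here.
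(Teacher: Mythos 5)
Your proposal is correct and is essentially identical to the paper's own proof: Proposition 2.3 is obtained there exactly by applying Chebyshev's inequality at threshold $A\sqrt{n\log n}$ with the linear variance bound from Proposition 2.2, yielding a bound of the form $K/(A^2\log n)$. Your additional remarks on why the $\sqrt{\log n}$ inflation is needed and on the insufficiency of this bound for almost-everywhere convergence accurately reflect the paper's subsequent discussion.
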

\begin{proof}
Immediate from Chebychev's inequality and Propositions 2.1 and 2.2, which indicate that for any $A>0$,
\begin{equation}\mathbb{P}\lr\left\vert |S|-\mathbb{E}(|S|)\right\vert \ge A\sqrt{{n \log n}}\rr\le\frac{K}{A^2\log n}\end{equation} for some constant $K$.
\end{proof}
\noindent {\bf Discussion.}  The above rate of convergence is unsatisfactory; for reasons to be made clearer, we would like to have a summable upper bound on the probability in (6).  This is equivalent to finding an exponential inequality to bound the probability.  Accordingly, we first attempted to use Azuma's inequality as found in \cite{alon}.  Here it turns out that a change in the orientation of a single edge can, in the worst case scenario, change the value of $S$ quite dramatically.  However, it can be shown that if the tournament is of diameter 2, and if a change in any edge orientation does not change the diameter, then $S$ cannot change by more than 2 and we have a 2-Lipschitz situation.  The probability of this, moreover, can be shown to be $1-\varepsilon_n$, where $\varepsilon_n$ is exponentially small. A modified version of Azuma's inequality, in which such small exceptional probabilities are allowed, may be found in \cite{chung}, Theorem 2.37 -- but this too proves to give us a width of concentration of $\Omega(n)$ around $\e(S)\sim n/2$, since there are a quadratic number of edges in the ``edge exposure martingale."  Using the vertex exposure martingale vastly changes the maximal change in $S$, and thus provides no improvement.  Likewise, Talagrand's inequality \cite{alon} involves a very large linear certification function, and is similarly unable to squeeze out a better upper bound in (3).  We thus resort to ``Chebychev's inequality on blocks" to prove the next result.  (Azuma's inequality on blocks, as methodically exploited by Frieze (see, e.g., \cite{steele}), could conceivably be used also.)
\begin{thm}  For each $\epsilon>0$,
\[\p\lr\vert S_n-\e(S_n)\vert>n^{0.5+\epsilon}\ {\rm infinitely\ often}\rr=0.\]
\end{thm}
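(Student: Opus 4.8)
The plan is to prove the almost-sure bound by a Borel--Cantelli argument along a sparse polynomial subsequence, with the gaps between checkpoints filled by a maximal-deviation estimate. First I would fix a coupling: realize all the $T_n$ simultaneously by orienting the edges of one infinite tournament and letting $T_n$ be the sub-tournament on $\{1,\dots,n\}$, so that $S_n$ becomes a genuine sequence of random variables on a single probability space. Writing $Y_n:=S_n-\mathbb{E}(S_n)$, the goal is to show that $|Y_n|\le n^{0.5+\epsilon}$ for all large $n$, almost surely. Since $\mathbb{P}(T_n \text{ has diameter} >2)$ is exponentially small (as already exploited in Proposition 2.1), Borel--Cantelli shows that almost surely $T_n$ has diameter $2$ for all large $n$; on this event every vertex satisfies $|N_1(v)|+|N_2(v)|=n-1$, so $v\in S$ exactly when its out-degree $o_n(v)$ is at most $(n-1)/2$. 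Thus, up to a finitely-often discrepancy (which cannot affect an ``infinitely often'' statement), $S_n=\sum_{v\le n}\mathbf 1\{o_n(v)\le (n-1)/2\}$, and the excess $o_n(v)-(n-1)/2$ performs, as $n$ grows, a symmetric $\pm\tfrac12$ random walk; this vertex-wise random-walk representation is the main engine of the proof.

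Next I would take the skeleton $n_k=\lceil k^{\alpha}\rceil$ with $\alpha>1/(2\epsilon)$ and control the checkpoints by Chebyshev. By Proposition 2.2, $\mathrm{Var}(S_{n_k})=O(n_k)$, so
\[\mathbb{P}\lr|Y_{n_k}|>\tfrac12 n_k^{0.5+\epsilon}\rr\le \frac{4\,\mathrm{Var}(S_{n_k})}{n_k^{1+2\epsilon}}=O\lr n_k^{-2\epsilon}\rr=O\lr k^{-2\alpha\epsilon}\rr,\]
which is summable precisely because $\alpha>1/(2\epsilon)$; hence $|Y_{n_k}|\le\tfrac12 n_k^{0.5+\epsilon}$ eventually, almost surely. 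It then suffices to control the within-block oscillation: if $\max_{n_k\le n\le n_{k+1}}|Y_n-Y_{n_k}|\le\tfrac12 n_k^{0.5+\epsilon}$ for all large $k$ (a.s.), then for every $n\in[n_k,n_{k+1}]$ we have $|Y_n|\le n_k^{0.5+\epsilon}\le n^{0.5+\epsilon}$, which is the claim.

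The heart of the matter is therefore a summable bound on $\mathbb{P}\lr\max_{n_k\le n\le n_{k+1}}|Y_n-Y_{n_k}|>\tfrac12 n_k^{0.5+\epsilon}\rr$, where $g_k:=n_{k+1}-n_k\asymp k^{\alpha-1}$. The key quantitative input, to be established from the random-walk representation, is that the within-block change has \emph{sub-linear} variance: $\mathrm{Var}(S_n-S_{n_k})=O\lr\sqrt{n_k g_k}\rr$ for $n_k\le n\le n_{k+1}$. Indeed, conditionally on the configuration at the checkpoint, an old vertex $v$ net-flips its Seymour status over the block only with probability $\asymp\sqrt{g_k/n_k}$ (its walk must return across the origin), and these contributions are independent across $v$ since they are driven by disjoint edge sets; summing gives $\asymp n_k\sqrt{g_k/n_k}=\sqrt{n_k g_k}$ rather than the much larger $g_k\sqrt{n_k}$ obtained by adding per-step variances. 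Feeding this variance into a \emph{variance-sensitive} (Bernstein-type) concentration inequality for the dominant part---a conditionally independent sum over vertices of bounded increments---yields a tail of the form $\exp\lr-c\,n_k^{2\epsilon}\rr$ at each fixed $n$; a union bound over the at most $g_k\le n_k$ values of $n$ in the block costs only a polynomial factor and leaves $\sum_k n_k\exp(-c\,n_k^{2\epsilon})<\infty$, so Borel--Cantelli finishes the proof.

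The step I expect to be the main obstacle is exactly this within-block maximal estimate, and it is instructive that the naive tools fail here for precisely the reason flagged in the discussion after Proposition 2.3: an edge-exposure (Azuma/McDiarmid) bound on $S_n-S_{n_k}$ has variance proxy equal to the number of exposed edges, namely $\asymp n_k g_k$, so it can only certify deviations of order $\sqrt{n_k g_k}$, which for our long blocks ($g_k$ nearly as large as $n_k$) is of order $n_k$---useless against the target $n_k^{0.5+\epsilon}$. The remedy is to abandon edge-exposure and concentrate vertex-by-vertex, where the true $\sqrt{n_k g_k}$ variance is visible. Two technical points need genuine care. First, one must prove the $\sqrt{n_k g_k}$ variance bound and the conditional independence across old vertices rigorously, a computation in the spirit of Proposition 2.2. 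Second, the Seymour indicators of the newly added vertices $n_k+1,\dots,n$ are \emph{not} independent, since they share ``new--new'' edges; these must be handled by a separate variance-sensitive bound, using that they number at most $g_k$ and contribute variance $O(g_k)$, so that their total fluctuation is likewise negligible against $n_k^{0.5+\epsilon}$.
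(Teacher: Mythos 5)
Your proposal is correct and follows essentially the same route as the paper: Chebyshev plus Borel--Cantelli along a polynomial skeleton ($n^{1/2\epsilon}$ in the paper, $k^\alpha$ with $\alpha>1/(2\epsilon)$ for you), reduction via the diameter-2 event to the out-degree threshold, and the key local-limit estimate that only a $\asymp\sqrt{g_k/n_k}$ fraction of old vertices have degree imbalance small enough to switch Seymour status within a block, with the new vertices' $O(g_k)$ contribution absorbed separately. The only differences are presentational (you phrase the within-block control as a variance bound fed into a Bernstein-type inequality, where the paper bounds the expected number of switchers and applies a Chernoff bound), so no further comparison is needed.
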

\begin{proof}  Let us prove equivalently that for each $\epsilon>0$, 
\[\p\lr\vert S_n-\e(S_n)\vert>n^{0.5+\epsilon}\log n\ \text{infinitely often}\rr=0,\] illustrating the method for $\epsilon=1/4$.  We have, by Chebychev's inequality,
\[\p\lr\vert S_{n^2}-\e(S_{n^2})\vert>n^{3/2}\log n\rr\le\frac{K}{n\log^2 n}\]  for some $K>0$.  Since the right side is summable, we use the Borel Cantelli lemma to argue as follows.  First, we identify the class of tournaments on ${\mathbb Z}^+$ with the unit interval $[0,1]$ endowed with Lebesgue measure $\lambda$.  Then the Borel Cantelli lemma implies that the Lebesgue measure of those tournaments for which $\vert S_{n^2}-\e(S_{n^2})\vert>n^{3/2}\log n$ occurs infinitely often is zero, or, equivalently that for each tournament $T$ on ${\mathbb Z}^+$ outside of an exceptional set of measure zero, there exists $N=N(T)$ such that 
\[n\ge N(T)\Rightarrow S_{n^2}\in[\e(S_{n^2})-n^{3/2}\log n, \e(S_{n^2})+n^{3/2}\log n].\]
The goal is to show that the maximal term between the ``checkpoints" determined by the subsequence $a_n=n^2$ cannot be too badly behaved.  For any $N\in{\mathbb Z}^+$, denote by $T_N$ the tournament induced on $N$ by $T$.  For $1\le i\le n^2$, let $I_i$ and $O_i$ be respectively the in- and out-degrees of vertices in $T_{n^2}$, and let $I_i'$ and $O_i'$ be the in- and out-degrees of vertices in $\{1,2,\ldots,n^2\}$ to the ``new" vertices $\{n^2+1,\ldots,j\}$, where $n^2+1\le j\le (n+1)^2-1$.  By Azuma's inequality,
\begin{eqnarray}\p\lr\bigcup_{i=1}^{n^2}\vert I_i-O_i\vert>\lambda\rr&\le&n^2\p(\vert I_1-O_1\vert>\lambda)\nonumber\\
&\le&2n^2\exp\{-\lambda^2/8n^2\},\nonumber\end{eqnarray}
so that
\[\p(A_n^C):=\p\lr\bigcup_{i=1}^{n^2}\vert I_i-O_i\vert>n{\sqrt{40\log n}}\rr\le\frac{2}{n^3}.\]
A similar analysis yields
\[\p(B_n^C):=\p\lr\bigcup_{i=1}^{n^2}\vert I_i'-O_i'\vert>{\sqrt{80n\log n}}\rr\le\frac{2}{n^3}.\] Finally $\p(C_n^C):=\p(diam(T_{n^2})\ge 3)$ is exponentially small.  Thus, for $j=n^2,\ldots, (n+1)^2-1$,
\begin{eqnarray}
&&\p\lr\vert S_j-\e(S_j)\vert>j^{3/4}\log j; \vert S_{n^2}-\e(S_{n^2})\vert\le n^{3/2}\log n\rr\nonumber\\
{}&\le&\p\lr\vert S_j-\e(S_j)\vert>2n^{3/2}\log n; \vert S_{n^2}-\e(S_{n^2})\vert\le n^{3/2}\log n\rr\nonumber\\
&\le&\p(D_j)+\frac{5}{n^3},
\end{eqnarray}
where
\[D_j=\{\vert S_j-\e(S_j)\vert>2n^{3/2}\log n; \vert S_{n^2}-\e(S_{n^2})\vert\le n^{3/2}\log n; A_n,B_n,C_n\}.\]
Now if $diam(T_N)=2$, it follows that a vertex $j$ is Seymour iff $O_j\le I_j$.  Thus, if originally the number of Seymour vertices is within $n^{3/2}\log n$ of $\e(S_{n^2})$, and if $\vert S_j-\e(S_j)\vert>2n^{3/2}\log n$, then what may have caused this?  Note that $\vert \e(S_j)-\e(S_{n^2})\vert\le Kn$ for some constant $K$ and for each $j=n^2+1,\ldots,(n+1)^2$.  Also, we may assume (as a worst case scenario) that the linear number of ``new" vertices cause a change to the Seymour status of $T_j$ by an amount equal to their magnitude.  This means that for large $n$, at least $(n^{3/2}\log n)/2$ of the original $n^2$ vertices must have ``switched" their Seymour status.  Now since $\vert I_j-O_j\vert\le n{\sqrt{40\log n}}$ and $\vert I_j'-O_j'\vert\le {\sqrt{80n\log n}}$, no $j$ with
\[{\sqrt{80n\log n}}\le\vert I_j-O_j\vert\le n{\sqrt{40\log n}}\]
can switch.  Now for some $L>0$,
\[\p(\vert I_i-O_i\vert=r)\le \frac{L}{n}\] for each $r$ in $[0,{\sqrt{80n\log n}}]$, so that the expected number of $i$'s that switch is no more than $n^2\cdot L{\sqrt{80n\log n}}/n\le Mn^{3/2}\sqrt{\log n}$ for some $M$.  Moreover, since the numbers of these $i$'s that switch are independent, we have a high concentration of the number of vertices that switch around the expected value.  The probability that more than $(n^{3/2}\log n)/2$ switch is thus exponentially small.  It follows from (7) that $ \p(D_j)\le \frac{1}{n^3}$ and thus that
\[\p\lr\vert S_j-\e(S_j)\vert>j^{3/4}\log j; \vert S_{n^2}-\e(S_{n^2})\vert\le n^{3/2}\log n\rr\le\frac{6}{n^3},\]
so that
\[\sum_{j=n^2}^{(n+1)^2-1} \p\lr\vert S_j-\e(S_j)\vert>j^{3/4}\log j; \vert S_{n^2}-\e(S_{n^2})\vert\le n^{3/2}\log n\rr\le\frac{12}{n^2},\]
which yields
\[\sum_{n=1}^\infty \sum_{j=n^2}^{(n+1)^2-1}\p\lr\vert S_j-\e(S_j)\vert>j^{3/4}\log j; \vert S_{n^2}-\e(S_{n^2})\vert\le n^{3/2}\log n\rr<\infty,\]
so that, with ``i.o." representing ``infinitely often for $n=1,2,\ldots$ and $ j\in\{n^2,\ldots,(n+1)^2-1\}$"
\[\p\lr\vert S_j-\e(S_j)\vert>j^{3/4}\log j; \vert S_{n^2}-\e(S_{n^2})\vert\le n^{3/2}\log n\ \text{i.o.}\rr=0.\]
Since \[\p\lr\vert S_{n^2}-\e(S_{n^2})\vert>n^{3/2}\log n\ \text{infinitely often}\rr=0, \]
Theorem 2.4 follows, with $\epsilon=1/4$.  The case of general $\epsilon$ follows by taking larger and larger subsequences; in fact for arbitrary $\epsilon>0$ we start the subsequence $N=n^{1/2\epsilon}$ and an estimate on $\p(|S_N-\e(S_N)|>n^{0.5+(1/4\epsilon)}\log n)$.  There are $n^{(1-2\epsilon)/2\epsilon}$ ``new vertices," and the proof exploits the difference between $I, O$ and $I', O'$ as above.
\end{proof}

Notice that we never really proved an exponential inequality above; rather, we were able to show that the {\it conclusion} of the Borel Cantelli lemma held for deviations of the form $\p(\vert S_n-\e(S_n)\vert>n^{0.5+\epsilon})$.  However, we believe:
\begin{conj}  Theorem 2.4 can be improved to assert that for some $K$,
\[\sum_{n=1}^\infty\p\lr\vert S_n-\e(S_n)\vert>K{\sqrt {n\log n}}\rr<\infty.\] The proof might involve exponential rather than polynomial subsequences.
\end{conj}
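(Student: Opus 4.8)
The plan is to upgrade the polynomial-subsequence argument of Theorem 2.4 to a genuine sub-Gaussian concentration inequality at the scale $\sqrt n$, from which the conjecture is immediate: if one can show
\[\p\lr\vert S_n-\e(S_n)\vert>t\rr\le 2\exp\lr-\frac{ct^2}{n}\rr\]
for all $t$ and some $c>0$, then the choice $t=K\sqrt{n\log n}$ gives the bound $2n^{-cK^2}$, which is summable as soon as $cK^2>1$. So the real task is the displayed bound, and everything below is aimed at it.

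First I would condition on the event $C_n=\{diam(T_n)=2\}$, whose complement is exponentially small (as recorded in the Discussion) and therefore contributes a harmlessly summable term. On $C_n$ the Seymour status of vertex $j$ is determined by its degrees alone, $j\in S$ iff $I_j\ge O_j$, so that $S_n=\sum_j\mathbf 1[I_j\ge O_j]$. I would then study the Doob (edge-exposure) martingale $M_0=\e(S_n),\ldots,M_{\binom n2}=S_n$ obtained by revealing the $\binom n2$ edge orientations one at a time. The reason vanilla Azuma fails is precisely the one noted earlier: bounding each increment by its worst case ($2$) and summing over $\Theta(n^2)$ edges yields a variance proxy of order $n^2$, a full factor of $n$ larger than the true variance $\Theta(n)$ of Proposition 2.2. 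The remedy is a variance-sensitive martingale inequality of Bernstein--Freedman type, which replaces the crude $n^2$ by the predictable quadratic variation $\langle M\rangle=\sum_e\e[(\Delta_e)^2\mid\mathcal F_{e-1}]$.

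The key structural observation is that $\e\langle M\rangle=\mathrm{Var}(S_n)=\Theta(n)$, since for a Doob martingale the expected quadratic variation equals the variance. Heuristically this holds because revealing one edge $\{u,v\}$ shifts the conditional probability that $u$ (or $v$) is Seymour only by the mass that the law of $O_u-I_u$ places near its threshold, an amount of order $n^{-1/2}$; squaring and summing over $\Theta(n^2)$ edges returns $\Theta(n)$, consistent with the fact that only the $\Theta(\sqrt n)$ borderline vertices (those with $\vert O_j-I_j\vert=O(1)$) are sensitive to a single edge. Granting a high-probability bound $\langle M\rangle\le Cn$ together with the trivial increment bound $\vert\Delta_e\vert\le 2$, Freedman's inequality produces the exponent $-t^2/\big(2(Cn+O(t))\big)\approx -t^2/(2Cn)$, which is exactly the sub-Gaussian tail required.

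The main obstacle is therefore not the increments but the tail of the quadratic variation itself: one must show $\p(\langle M\rangle>Cn)$ is at least polynomially small, and since $\langle M\rangle$ is a sum of conditional variances this is a second-order concentration statement that threatens to be as delicate as the original problem. I would attack it by stopping the martingale the first time $\langle M\rangle$ exceeds $Cn$, so that only the event $\{\langle M\rangle>Cn\}$ needs a separate (and cruder) bound, and by controlling the running quadratic variation through the number of currently-borderline vertices, which an anticoncentration (Littlewood--Offord) estimate for the degree differences $O_j-I_j$ should pin down. As a fallback aligned with the hint following the statement, one can instead retain the subsequence method but replace Chebyshev by a sharp moment bound $\e[(S_n-\e(S_n))^{2m}]=O(n^m)$ (provable by showing the joint cumulants of the indicators $\mathbf 1[I_j\ge O_j]$ decay, so that the Gaussian main term $(2m-1)!!\,\mathrm{Var}^m$ dominates); Markov's inequality at exponentially spaced checkpoints $a_k=\lceil c^k\rceil$ then yields summable checkpoint probabilities, and this is exactly why \emph{exponential} rather than polynomial subsequences are forced. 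The price of this route is that the blocks $[a_k,a_{k+1})$ are now long—of order $a_k$ new vertices—so the within-block drift control via the $I,O$ versus $I',O'$ decomposition of Theorem 2.4 must be strengthened to a maximal inequality over each block, and reconciling this long-block drift with the checkpoint spacing is where the bulk of the remaining work lies.
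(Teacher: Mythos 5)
The statement you are addressing is left \emph{unproved} in the paper: it is stated as a conjecture, with only the hint that exponential subsequences might be needed. So there is no in-paper argument to compare against, and your proposal must stand on its own. It does not yet: it is a plausible research program with its hardest step explicitly deferred, not a proof.

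Your reduction is sound --- a tail bound $2\exp(-ct^2/n)$ at $t=K\sqrt{n\log n}$ gives $2n^{-cK^2}$, summable for $K$ large --- and the Freedman/quadratic-variation route is the right instinct: the identity $\e\langle M\rangle=\mathrm{Var}(S_n)=\Theta(n)$ for the Doob martingale is correct and is exactly why worst-case Azuma loses a factor of $n$ in the exponent. But the conjecture is not established until you prove a \emph{summably high-probability} upper bound $\p(\langle M\rangle>Cn)\le\beta_n$ with $\sum_n\beta_n<\infty$, and that is a second-order concentration statement about the number of near-borderline vertices encountered along the exposure, at least as delicate as the original problem. Your stopping-time truncation does not remove this requirement; it only isolates it. Until that lemma is written down and proved, the gap is real. (A secondary, fixable issue: conditioning on $\{diam(T_n)=2\}$ destroys the independence of the edge orientations and hence the martingale structure; you should instead run the martingale for the degree-proxy count $\sum_j\mathbf 1[I_j\ge O_j]$ and transfer back, since the two counts differ only on an exponentially small event.)

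The fallback route has its own unclosed core, and it is worth being precise about why. A fixed moment bound $\e[(S_n-\e S_n)^{2m}]=O_m(n^m)$ yields only $\p(|S_n-\e S_n|>K\sqrt{n\log n})=O_m((\log n)^{-m})$, which is \emph{not} summable over $n$ for any fixed $m$; it becomes summable only along exponentially spaced checkpoints $a_k=\lceil c^k\rceil$, which is presumably the content of the paper's hint. But then the block $[a_k,a_{k+1})$ contains $\Theta(a_k)$ new vertices, over which the degree differences $I_j-O_j$ drift by $\Theta(\sqrt{a_k})$ --- enough to flip order $a_k\cdot(1/\sqrt{a_k})\cdot\sqrt{a_k}=\Theta(a_k)$ borderline vertices. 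A union bound of the per-$j$ tail over the block gives $c^k k^{-m}\to\infty$, so you genuinely need a maximal inequality for $\max_{j\in[a_k,a_{k+1})}|S_j-\e S_j|$, and the short-block drift argument of Theorem 2.4 (which relies on only $O(\sqrt{a_k})$ new vertices per block) does not survive this lengthening. You flag both obstacles honestly, but flagging them is not the same as resolving them; as written, the proposal leaves the conjecture open, exactly as the paper does.
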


\section{Random Digraphs}
In this section, we consider random digraphs $D(n,p)$ defined as follows: for each pair of vertices $(u,v) \in V(D)$, we place an arc from $u$ to $v$ with probability $p<1/2$; similarly, we place an arc from $v$ to $u$ with probability $p$. This construction gives no anti-edges, and the probability that there is no edge between $u$ and $v$ is $1-2p$. We allow for the case that $p=p_n\to0$ slowly as $n\to\infty$ or that $p=p_n=0.5-o(1)$.  In order to study the behavior of the number of Seymour vertices, we need the following concentration inequality from \cite{chung}.
\begin{lem}
Let $X$ be a sum of independent indicator random variables. Then for any $\epsilon>0$,
$$\mathbb{P}(X\geq (1+\epsilon)\mathbb{E}(X))\leq \left[\frac{e^{\epsilon}}{(1+\epsilon)^{1+\epsilon}}\right]^{\e(X)} .$$
\end{lem}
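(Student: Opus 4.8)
The plan is to prove this by the standard Chernoff exponential-moment method, i.e.\ to apply Markov's inequality not to $X$ itself but to the monotone transform $e^{tX}$ for a free parameter $t>0$ that I will optimize at the end. Writing $X=\sum_{i=1}^m X_i$ with the $X_i$ independent indicators, $\p(X_i=1)=p_i$, and $\mu:=\e(X)=\sum_i p_i$, the first step is simply
$$\p\big(X\ge(1+\epsilon)\mu\big)=\p\lr e^{tX}\ge e^{t(1+\epsilon)\mu}\rr\le \frac{\e\big(e^{tX}\big)}{e^{t(1+\epsilon)\mu}},$$
valid for every $t>0$ since $x\mapsto e^{tx}$ is increasing.

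The workhorse of the argument is the factorization of the moment generating function afforded by independence, followed by the elementary inequality $1+x\le e^x$. Concretely,
$$\e\big(e^{tX}\big)=\prod_{i=1}^m\e\big(e^{tX_i}\big)=\prod_{i=1}^m\big(1+p_i(e^t-1)\big)\le\prod_{i=1}^m e^{p_i(e^t-1)}=e^{(e^t-1)\mu}.$$
Substituting this into the previous display gives, for every $t>0$, the bound $\p(X\ge(1+\epsilon)\mu)\le\exp\{\mu[(e^t-1)-t(1+\epsilon)]\}$, so the whole problem reduces to minimizing the single-variable function $g(t):=(e^t-1)-t(1+\epsilon)$ over $t>0$.

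The final step is this optimization, which is routine: solving $g'(t)=e^t-(1+\epsilon)=0$ yields $t=\ln(1+\epsilon)$, and this choice is admissible precisely because $\epsilon>0$ forces it to be positive. Plugging back in produces the exponent $\mu\big[\epsilon-(1+\epsilon)\ln(1+\epsilon)\big]$, and since $\exp\{\epsilon-(1+\epsilon)\ln(1+\epsilon)\}=e^{\epsilon}/(1+\epsilon)^{1+\epsilon}$, exponentiating delivers exactly the claimed inequality. No step presents a genuine obstacle; the only point requiring a moment's care is that we are on the upper-tail side, so we must take $t>0$ (to keep $e^{tX}$ increasing) and must check the minimizing $t$ is positive, both of which hold automatically for $\epsilon>0$. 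Since the statement is quoted directly from \cite{chung}, one could equally just cite it, but the derivation above is self-contained and short enough to include.
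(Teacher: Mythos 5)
Your derivation is correct: the Markov step applied to $e^{tX}$, the factorization of the moment generating function by independence, the bound $1+p_i(e^t-1)\le e^{p_i(e^t-1)}$, and the optimization at $t=\ln(1+\epsilon)$ are all carried out accurately and yield exactly the stated exponent $\epsilon-(1+\epsilon)\ln(1+\epsilon)$. The paper itself offers no proof of this lemma -- it simply quotes the inequality from Chung and Lu -- so there is nothing to compare against beyond noting that yours is the standard Chernoff exponential-moment argument that underlies the cited result, and it is a perfectly valid self-contained substitute for the citation.
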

\begin{thm}
Let $D(n,p)$ be a random digraph on $n$ vertices with probability $\sqrt{\frac{(2+\epsilon)\log n}{n}} \leq p < \frac{1}{2}-\delta_n $, where $\epsilon>0$ is arbitrary and $\delta_n\to0$ is specified below.  Let $S$ be the set of its Seymour vertices.  Then
$\mathbb{E}(|S|) =n-o(1), n\to\infty.$
\end{thm}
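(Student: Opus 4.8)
The plan is to show that the expected number of \emph{non}-Seymour vertices is $o(1)$. By symmetry and linearity, $\e(|S|) = n\,\p(1\in S) = n - n\,\p(1\notin S)$, so it suffices to prove $n\,\p(1\notin S) = o(1)$. Write $D := |N_1(1)| \sim \mathrm{Bin}(n-1,p)$ for the out-degree of vertex $1$, and let $B$ be the number of vertices $w\ne 1$ lying in neither $N_1(1)$ nor $N_2(1)$ (i.e.\ at distance $\ge 3$ from $1$, or unreachable). Then $|N_2(1)| = n-1-D-B$, so vertex $1$ fails to be Seymour precisely when $B > n-1-2D$. I would therefore bound $\p(B > n-1-2D)$ by splitting on the size of $D$.

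The engine of the argument is a second-neighbourhood covering estimate. Conditioned on the out-neighbourhood $A = N_1(1)$, a vertex $w\notin A\cup\{1\}$ is counted by $B$ iff no arc runs from $A$ to $w$; these events are independent across $w$, each of probability $(1-p)^{|A|}$, so $\e(B\mid A) = (n-1-|A|)(1-p)^{|A|}$. Averaging over $D$ via the probability generating function of $\mathrm{Bin}(n-1,p)$ gives the clean bound
\[
\e(B) \le (n-1)\,\e\big[(1-p)^{D}\big] = (n-1)(1-p^2)^{\,n-1} \le (n-1)e^{-(n-1)p^2}.
\]
The hypothesis $p \ge \sqrt{(2+\epsilon)\log n / n}$ is exactly what makes $(n-1)p^2 \ge (2+\epsilon)\tfrac{n-1}{n}\log n$, so $\e(B) \le n^{-(1+\epsilon/2)}$ for large $n$; this is the probabilistic analogue of the random digraph having diameter two, and it forces $B$ to be tiny.

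Next I would control $D$ from above using the Chernoff bound of Lemma 3.1. Set $\epsilon' := (1-2p)/(4p) > 0$ and $G := \{D \le (1+\epsilon')(n-1)p\}$; a short computation gives $(1+\epsilon')p = \tfrac14 + \tfrac{p}{2} < \tfrac12$, and on $G$ the margin satisfies $n-1-2D \ge (n-1)(\tfrac12 - p) \ge (n-1)\delta_n$. Hence $\{1\notin S\}\cap G \subseteq \{B > (n-1)\delta_n\}$, and Markov's inequality yields $\p(B > (n-1)\delta_n) \le \e(B)/((n-1)\delta_n) = o(1/n)$. By Lemma 3.1, $\p(G^c) \le [e^{\epsilon'}/(1+\epsilon')^{1+\epsilon'}]^{(n-1)p}$. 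Combining, $n\,\p(1\notin S) \le n\,\p(G^c) + n\,\p(B>(n-1)\delta_n)$, and I claim both terms tend to $0$, which gives $\e(|S|) = n - o(1)$.

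The one genuinely delicate point — and the place where $\delta_n$ must be \emph{specified} — is the tail $\p(G^c)$ when $p$ is close to $\tfrac12$. There $\epsilon' = (1-2p)/(4p) \approx \delta_n$ is small, so $\p(G^c) \approx \exp\{-(n-1)p\,\epsilon'^2/2\} = \exp\{-\Theta(n\delta_n^2)\}$, and for this to beat $1/n$ I must take $\delta_n$ with $n\delta_n^2 \gg \log n$, e.g.\ $\delta_n = \sqrt{C\log n / n}$ with $C$ large (any $\delta_n\to 0$ with $\delta_n\sqrt{n/\log n}\to\infty$ works). For $p$ bounded away from $\tfrac12$, $\epsilon'$ is bounded below and $(n-1)p \gtrsim \sqrt{n\log n}$, so $\p(G^c)$ is super-polynomially small and imposes no constraint, covering the whole range $\sqrt{(2+\epsilon)\log n/n}\le p<\tfrac12-\delta_n$. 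I expect the main obstacle to be precisely this trade-off near $p=\tfrac12$: keeping the out-degree below $(n-1)/2$ forces $\epsilon'$ small, while retaining a usable Chernoff decay wants $\epsilon'$ large, and the tension is resolved exactly by the stated choice of $\delta_n$.
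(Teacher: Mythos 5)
Your proof is correct and follows essentially the same route as the paper: both arguments rest on Lemma 3.1 to keep the out-degree below $(n-1)/2$ (this is where $p<\tfrac12-\delta_n$ with $\delta_n\asymp\sqrt{\log n/n}$ enters) and on the estimate $(1-p^2)^{n-1}\le e^{-(n-1)p^2}$ for vertices outside the first two neighborhoods (this is where $p\ge\sqrt{(2+\epsilon)\log n/n}$ enters). Your bookkeeping via $B$ and the slack parameter $\epsilon'=(1-2p)/(4p)$ is a slightly more robust packaging --- the paper instead conditions on \emph{every} vertex lying within distance two of vertex $1$ --- but the two key estimates and the resulting specification of $\delta_n$ coincide.
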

\begin{proof}
Let $X=\vert S\vert$.
We have \begin{eqnarray}\e(X)&=& n \mathbb{P}( 1 \in S)\nonumber\\&\geq& n\mathbb{P}(1\in S; |N_1(1)| +|N_2(1)| = n-1)\nonumber\\
&=& n \mathbb{P}\lr|N_1(1)|\leq \frac{n-1}{2}; |N_t(1)|=0 \text{ for all }t\geq 3 \rr\nonumber\\ 
&\ge&n \mathbb{P}\lr|N_1(1)|\leq \frac{n-1}{2}\rr - n\mathbb{P}(|N_1(1)|+|N_2(1)|<n-1)\nonumber\\
&\ge&n \mathbb{P}\lr|N_1(1)|\leq \frac{n-1}{2 }\rr- n(n-1)(1-p)(1-p^2)^{n-2}\nonumber\\
&=&n \mathbb{P}\lr|N_1(1)|\leq \frac{n-1}{2}\rr- o(1),
\end{eqnarray}
provided that $p\ge {\sqrt{(2+\epsilon)\frac{\log n}{n}}}$.
But, by Lemma 3.1,
\begin{eqnarray}n \mathbb{P}\lr|N_1(1)|\leq \frac{n-1}{2}\rr&=&n\p\lr{\rm Bin}(n,p)\le\frac{n-1}{2}\rr\nonumber\\
&\ge&n\lr1-\p\lr{\rm Bin}(n,p)>\frac{n-1}{2}\rr\rr\nonumber\\
&\ge&n-n\lr\frac{2pe}{e^{2p}}\rr^{n/2}.\end{eqnarray}  Now the function $\varphi(p)=n\lr\frac{2pe}{e^{2p}}\rr^{n/2}$ tends to zero for each fixed $p\in(0,1/2)$, but on letting $p\to 1/2$ and setting $\lr\frac{2pe}{e^{2p}}\rr=1-\epsilon_n$, we see that the right side of (9) is of the form $n-ne^{-n\epsilon_n/2}=n-o(1)$ if $\epsilon_n=(2+\eta)\log n/n$, where $\eta>0$ is arbitrary.  Thus by (8) and (9) we have $\e(X)\ge n-o(1)$ if ${\sqrt{(2+\epsilon)\frac{\log n}{n}}}\le p\le 0.5-\delta_n$ for a $\delta_n$ that may be computed explicitly.  This proves the result.
\end{proof}
\begin{cor}
Let $D(n,p)$ be a random digraph with $p$ as in Theorem 3.2. As $n$ goes to infinity, $D$ has exactly $n$ Seymour vertices with high probability.
\end{cor}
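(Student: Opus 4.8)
The plan is to deduce the corollary directly from Theorem 3.2 by a first-moment (Markov) argument applied to the \emph{deficiency} $Y := n - |S|$. The crucial structural fact is that $|S| \le n$ holds deterministically, since $D(n,p)$ has only $n$ vertices and each is either a Seymour vertex or not; consequently $Y$ is a nonnegative, integer-valued random variable taking values in $\{0,1,\ldots,n\}$. Establishing the corollary therefore amounts to showing that this nonnegative variable is zero with high probability.

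First I would record that, by linearity, $\mathbb{E}(Y) = n - \mathbb{E}(|S|)$, which by Theorem 3.2 equals $n - (n - o(1)) = o(1)$ as $n \to \infty$, throughout the admissible range $\sqrt{(2+\epsilon)(\log n)/n} \le p < \tfrac12 - \delta_n$. Next I would observe that the event that not every vertex is Seymour is exactly $\{|S| < n\} = \{Y \ge 1\}$. Since $Y$ is nonnegative, Markov's inequality yields $\mathbb{P}(Y \ge 1) \le \mathbb{E}(Y) = o(1)$, whence $\mathbb{P}(|S| = n) = 1 - \mathbb{P}(Y \ge 1) \to 1$. Thus $D$ has exactly $n$ Seymour vertices with high probability.

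I do not expect any genuine obstacle here: all of the analytic work -- the binomial tail estimate via Lemma 3.1 and the calibration of $\delta_n$ and $\epsilon_n$ -- has already been carried out in Theorem 3.2. The one point deserving emphasis is the elementary but essential observation that a nonnegative integer-valued variable with vanishing expectation must equal $0$ with probability tending to $1$. This is precisely why it mattered to prove the sharp statement $\mathbb{E}(|S|) = n - o(1)$ in Theorem 3.2, rather than merely $\mathbb{E}(|S|) \sim n$: the former forces $\mathbb{E}(Y) \to 0$ and hence the full concentration at the maximal value, whereas the latter would not.
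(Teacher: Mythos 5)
Your argument is correct and is essentially the paper's own proof in different clothing: the paper supposes $\mathbb{P}(|S|\le n-1)=q$ and bounds $\mathbb{E}(|S|)\le (n-1)q+n(1-q)=n-q$, which is exactly Markov's inequality applied to $Y=n-|S|$ as you do. Both arguments hinge on the same two facts you emphasize, namely $|S|\le n$ deterministically and the sharp estimate $\mathbb{E}(|S|)=n-o(1)$ from Theorem 3.2.
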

\begin{proof}
Suppose $|S|\leq n-1$ with some probability $q>0$. Then
$\mathbb{E}(|S|)\leq (n-1)q+n(1-q)=n-q,$
which contradicts the fact that $\mathbb{E}(|S|)\geq n-o(1)$ as proven in Theorem 3.2.  Notice that our approach will also allow us to squeeze out results along the lines of an assertion that states that for an infinite tournament $T$, $\p(\vert S_n\vert\le n-1\ \text {infinitely often})=0$. \end{proof}


\section{Acknowledgments}
YZ was supported by Grant No. 14-12 from the Acheson J. Duncan Fund for the Advancement of Research in Statistics at The Johns Hopkins University.  ZC and LWH were supported by NSF Grant 0139286.  AG was supported by NSF Grants 0139286 and 1263009.


\begin{thebibliography}{99}
\bibitem{alon} N.~Alon and J.~Spencer, {\it The Probabilistic Method}, Wiley, New York, 1992.
\bibitem{caccetta} L.~Caccett and R.~H\"aggkvist (1978). ``On minimal digraphs with given girth," {\it Congress. Numer.} {\bf 21}, 181--187.
\bibitem{chen} G.~Chen, J.~Shen, and R.~Yuster (2003).  ``Second neighborhood via first neighborhood in digraphs," {\it Annals of Combinatorics} {\bf 7}, 15--20. 
\bibitem{chung}F.~Chung and L.~Lu, {\it Complex Graphs and Networks}, American Mathematical Society, Providence, 2006.
\bibitem{fisher} Fisher, D.~C.~ (1996).  ``Squaring a tournament: a proof of Dean's conjecture," {\it J. Graph Theory} {\bf 23}, 43--48.
\bibitem{kaneko} Y.~Kaneko and S.~Locke (2001).  ``The minimum degree approach for Seymour's distance 2 conjecture," {\it Congress. Numer.} {\bf 148}, 201--206.
\bibitem{steele} J.~M.~Steele, {\it Probability Theory and Combinatorial Optimization,} SIAM, Philadelphia, 1997.
\bibitem{west} D.~West, {\it Introduction to Graph Theory, Second Edition}, Prentice Hall, Upper Saddle River, New Jersey, 2001.
\bibitem{west2} \url{http://www.math.uiuc.edu/~west/openp/2ndnbhd.html}
\bibitem{west3} \url{http://www.math.uiuc.edu/~west/openp/cacchagg.html} 
\end{thebibliography}
\end{document}